\providecommand{\AddToHook}[2]{}
\documentclass[hidelinks,onefignum,onetabnum]{siamart251216}
\usepackage{graphicx}
\graphicspath{{rom_figures/}}
\usepackage{amsfonts}
\usepackage{booktabs}
\usepackage{placeins}
\usepackage{algorithmicx}
\usepackage{algpseudocode}
\usepackage{amsopn}
\newsiamremark{remark}{Remark}
\crefname{remark}{Remark}{Remarks}
\usepackage{color}

\newcommand{\dv}{\mathrm{d}v}
\newcommand{\dx}{\mathrm{d}x}
\newcommand{\rd}{ \mathrm{d}}
\newcommand{\intv}[1]{\left\langle #1 \right\rangle_{\Delta v}}
\newcommand{\norm}[1]{\left\lVert #1\right\rVert}
\newcommand{\sterr}{\operatorname{err}_{\mathrm{st}}}
\usepackage{url}

\definecolor{revisionnote}{rgb}{0,0.4,0.2}

\headers{A Micro-ROM for High-Field VPFP}{S. Zhang and J. Hu}
\title{A Reduced-Order Model for the High-Field Vlasov--Poisson--Fokker--Planck System Based on a Micro--Macro Decomposition}
\author{Shiheng Zhang\thanks{Department of Applied Mathematics, University of Washington, Seattle, WA 98195, USA (\email{shzhang3@uw.edu}).}
\and Jingwei Hu\thanks{Department of Applied Mathematics, University of Washington, Seattle, WA 98195, USA (\email{hujw@uw.edu}).}}

\begin{document}
\maketitle

\begin{abstract}
We develop a projection-based reduced-order model for the high-field Vlasov--Poisson--Fokker--Planck system. Based on a micro--macro decomposition, the proposed Micro-ROM applies proper orthogonal decomposition (POD)--Galerkin reduction only to the microscopic fluctuation while retaining the macroscopic density on the full spatial grid. The field-dependent Maxwellian and collision operator introduce additional computational costs that cannot be eliminated by Galerkin projection alone. To remove the dependence on the velocity-grid resolution during online simulations, we approximate the projected Maxwellian terms using precomputed Chebyshev expansions of functions of the local electric field and exploit a separable discretization of the collision operator to assemble the reduced collision matrix exactly from precomputed blocks. A first-order implicit--explicit scheme is employed to evolve the reduced coefficients and the density. The resulting method conserves discrete mass and formally recovers a discretization of the limiting high-field model as $\varepsilon\rightarrow 0$. Its online computational cost per time step is independent of the velocity-grid resolution. Numerical experiments demonstrate the accuracy and efficiency of the proposed method across a range of parameters, including the high-field regime.
\end{abstract}

\begin{keywords}
Vlasov--Poisson--Fokker--Planck system, reduced-order modeling, POD--Galerkin projection, micro--macro decomposition, affine decomposition
\end{keywords}

\begin{MSCcodes}
65L04, 65M60, 35Q83, 35Q84
\end{MSCcodes}

\section{Introduction}
\label{sec:intro}

The Vlasov--Poisson--Fokker--Planck (VPFP) system models the evolution of charged particles subject to a self-consistent electric field and collision effects. In the high-field regime considered here, the electric-field drift and collisions act on the same fast time scale. Our goal is to develop a reduced-order model that enables efficient repeated simulations over a range of parameters. Specifically, we consider the parameter vector $\mu=(\varepsilon,\vartheta)\in\Theta$, where $\varepsilon>0$ controls the common strength of the electric-field drift and collisions, while $\vartheta$ parametrizes the initial datum and the background charge. In one spatial and one velocity dimension (1D1V), the distribution $f_\mu=f_\mu(t,x,v)$  satisfies 
\begin{equation}\label{eq:VPFP-original}
  \partial_t f_\mu + v\,\partial_x f_\mu + \frac{1}{\varepsilon}\,E_\mu\,\partial_v f_\mu
  \;=\; \frac{1}{\varepsilon}\,\bigl(\partial_{vv} f_\mu + \partial_v(v f_\mu)\bigr),
  \qquad (x,v)\in\Omega_x\times\mathbb{R},
\end{equation}
with periodic boundary conditions in $x$, decay as $|v|\rightarrow \infty$, and initial condition $f_\mu(0,x,v)=f_\vartheta^0(x,v)$. The right-hand side is the linear Fokker--Planck collision operator \cite{dougherty1964model,lenard1958plasma}. The electric field is determined by the macroscopic density $\rho_\mu:=\langle f_\mu\rangle_v$, where $\langle z\rangle_v:=\int_{\mathbb{R}}z\,\dv$, through the Poisson equation
\begin{equation}\label{eq:Poisson}
  E_\mu = -\partial_x\psi_\mu,\qquad -\partial_{xx}\psi_\mu=\rho_\mu-\eta_\vartheta,
\end{equation}
where the static background charge $\eta_\vartheta(x)$ satisfies the neutrality condition
\[
\int_{\Omega_x}\bigl(\rho_\mu(0,x)-\eta_\vartheta(x)\bigr)\,\dx=0.
\]
This condition ensures the solvability of \eqref{eq:Poisson}. We write the field solve compactly as $E_\mu=\mathcal{P}(\rho_\mu-\eta_\vartheta)$, $\mathcal{P}:=-\partial_x(-\partial_{xx})^{-1}$, where $(-\partial_{xx})^{-1}$ denotes the inverse of the periodic Laplacian restricted to functions with zero spatial average. 

Since $E_\mu$ is independent of $v$, the electric-field drift can be incorporated into the collision operator. Introducing the shifted Maxwellian
\begin{equation}\label{eq:Maxwellian_full_def}
    M_\mu(t,x,v) \;:=\; \frac{\rho_\mu(t,x)}{\sqrt{2\pi}}\,\exp\!\Bigl(-\frac{(v - E_\mu(t,x))^2}{2}\Bigr),
\end{equation}
equation \eqref{eq:VPFP-original} can then be written as
\begin{equation}\label{eq:VPFP_flux}
    \partial_t f_\mu + v\,\partial_x f_\mu \;=\; \frac{1}{\varepsilon}\,\mathcal{L}[E_\mu](f_\mu),
    \qquad
    \mathcal{L}[E_\mu](f) \;:=\; \partial_v \left( M_\mu\,\partial_v \left(\frac{f}{M_\mu}\right) \right),
\end{equation}
where $\mathcal{L}[E_\mu]$ is the generalized Fokker--Planck collision operator associated with the drift $E_\mu$. Although $M_\mu$ contains the density $\rho_\mu$, this factor cancels in $\mathcal{L}[E_\mu]$, so the operator depends on the solution only through the field. Taking the zeroth velocity moment $\langle \cdot \rangle_v$ of \eqref{eq:VPFP_flux} yields the continuity equation
\begin{equation}\label{eq:continuous_macro_equation}
    \partial_t \rho_\mu + \partial_x \langle v\,f_\mu\rangle_v \;=\; 0.
\end{equation}
In the high-field limit $\varepsilon\to0$, the collision operator in \eqref{eq:VPFP_flux} drives $f_\mu$ toward the local equilibrium $M_\mu$. Consequently, $\langle v\,f_\mu\rangle_v \rightarrow \langle v\,M_\mu\rangle_v=\rho_\mu E_\mu$. Equation \eqref{eq:continuous_macro_equation} therefore closes in the limit and reduces to \cite{nieto2001high}:
\begin{equation}\label{eq:continuous_high_field_limit}
    \partial_t\rho_\mu + \partial_x(\rho_\mu E_\mu) \;=\; 0, \qquad E_\mu = \mathcal{P}(\rho_\mu - \eta_\vartheta),
\end{equation}
which is a nonlinear transport equation driven by the self-generated electric field.

Numerical simulation of \eqref{eq:VPFP-original} presents two main challenges. The first is stiffness: explicit time integration requires time steps that resolve $\varepsilon$, whereas asymptotic-preserving (AP) schemes \cite{jin1999efficient,jin2022asymptotic} are designed to allow $\varepsilon$-independent time steps while recovering the correct macroscopic limit \eqref{eq:continuous_high_field_limit}. For the high-field VPFP system, an AP scheme was constructed in \cite{jin2011asymptotic}, followed by variational and structure-preserving extensions in \cite{carrillo2021variational,blaustein2024structure}. 

The second challenge is the cost of resolving phase space, compounded by the parameter dependence. A tensor-product discretization with $N_x$ spatial and $N_v$ velocity degrees of freedom involves $N_xN_v$ unknowns, and this computational cost must be incurred repeatedly for queries $\mu\in\Theta$. Projection-based reduced-order modeling (ROM) \cite{benner2015survey,quarteroni2011certified} is well suited to such many-query settings. The computation is split into an offline stage, performed once, in which the full-order model (FOM) is solved at selected training parameters and a low-dimensional basis is extracted, for example by proper orthogonal decomposition (POD), and an online stage, in which each new query is solved within the span of this reduced basis. This paradigm differs from the dynamical low-rank (DLR) method \cite{koch2007dynamical,lubich2014projector}, which performs model reduction online by evolving a low-rank approximation of the solution for each parameter query. AP-DLR integrators have been developed for the high-field VPFP system and the corresponding Vlasov--Amp\`ere formulation \cite{zhang2026separable,coughlin2022efficient}.

In this work, we adopt the micro--macro decomposition \cite{bennoune2008uniformly,crouseilles2011asymptotic}, in which the solution is written as the sum of its local equilibrium and a scaled nonequilibrium fluctuation,
\begin{equation}\label{eq:continuous_macro_micro_split}
    f_\mu \;=\; M_\mu + \varepsilon\,g_\mu.
\end{equation}
The pair $(\rho_\mu,g_\mu)$ is then evolved in place of $f_\mu$. Since $\mathcal{L}[E_\mu]$ annihilates $M_\mu$, the collision operator acts only on the fluctuation $g_\mu$. We retain the density at full spatial resolution and apply POD--Galerkin reduction only to the microscopic fluctuation, using a fixed phase-space basis trained over a range of physical parameters, including $\varepsilon$. The resulting Micro-ROM evolves the density together with \(r\) reduced coefficients, where \(r\) is the number of POD basis functions. This formulation also preserves the macroscopic balance: collisions do not directly contribute to the density equation. In contrast, direct POD--Galerkin reduction of the distribution function does not necessarily preserve the zero velocity moment of the collision operator, potentially introducing a spurious contribution to the density evolution that is amplified by the factor $1/\varepsilon$. As discussed in \S\ref{sec:stiff_limit} and Appendix~\ref{sec:pod_appendix}, this provides a structural motivation for the micro–macro approach, particularly in the high-field regime.

Micro--macro decomposition has also been combined with model reduction to preserve macroscopic structure. For the time-dependent radiative transfer equation, Peng et al.\ \cite{peng2024micro} developed a micro--macro reduced-basis method in which both the microscopic and macroscopic components are reduced, with an equilibrium-respecting strategy for constructing the reduced spaces. Their construction treats time and angular direction as parameters, with separate ROMs demonstrated at fixed values of $\varepsilon$. Koellermeier, Krah, and Kusch \cite{koellermeier2024macro} considered hyperbolic shallow-water moment equations and retained the conservative macroscopic variables at full order while applying POD--Galerkin or DLR reduction only to the microscopic variables. More closely related to the kinetic setting considered here, Franck et al.\ \cite{franck2024hyperbolic} developed a reduced model for the Vlasov--Poisson equation with a nonlinear Fokker--Planck collision operator, using model reduction in the velocity variable together with the discrete empirical interpolation method (DEIM) \cite{barrault2004empirical,chaturantabut2010nonlinear} for collision evaluation and corrections to enforce reduced moment conservation and preserve reduced Maxwellian equilibrium.

In the present micro--macro formulation, the field-dependent Maxwellian and collision operator introduce online computational bottlenecks that Galerkin projection alone does not remove. Direct evaluation of the projected Maxwellian terms costs $\mathcal{O}(N_xN_vr)$, while direct assembly of the reduced collision matrix costs $\mathcal{O}(N_xN_vr^2)$. These operations retain dependence on the full velocity resolution and can substantially limit the computational savings of model reduction. Our main contribution is to exploit the structure of these terms to eliminate velocity-grid operations from the online time evolution without empirical interpolation or online velocity quadrature. Specifically, at each spatial cell, we evaluate the projected Maxwellian terms using precomputed Chebyshev approximations of functions of the local field, yielding an online cost of $\mathcal{O}(N_xrK)$, where $K$ is the Chebyshev degree, typically much smaller than \(N_v\). For the collision term, we adopt the separable discretization introduced in the DLR setting in \cite{zhang2026separable}, which separates the field dependence from the velocity dependence. The sums over the velocity grid are precomputed offline for each spatial cell, allowing exact assembly of the reduced collision matrix at an online cost of $\mathcal{O}(N_xr^2)$.  With a dense reduced solve and an FFT-based Poisson solver, the resulting per time step online complexity is $\mathcal{O}(N_xr^2+N_xrK+r^3+N_x\log N_x)$, independent of $N_v$.

The rest of this paper is organized as follows. Section~\ref{sec:continuous_setup} derives the micro--macro system. Section~\ref{sec:micro_rom_semi_discrete} introduces the reduced model, identifies the online bottlenecks, and derives the continuous field maps. Section~\ref{sec:online_cost_reduction} develops the time discretization, constructs the discrete field maps, and derives the reduced collision operator, followed by the complete algorithm. Section~\ref{sec:numerical_results} presents the numerical results, and Section~\ref{sec:con} concludes the paper. 

\section{Micro--macro formulation}
\label{sec:continuous_setup}

The decomposition \eqref{eq:continuous_macro_micro_split} separates the local equilibrium from the nonequilibrium fluctuation, and turns \eqref{eq:VPFP_flux} into an evolution system for the pair $(\rho_\mu,g_\mu)$ in which the high-field limit is explicit. Since $\langle f_\mu\rangle_v=\langle M_\mu\rangle_v=\rho_\mu$, the fluctuation has zero velocity moment,
\begin{equation}\label{eq:zero_moment}
    \langle g_\mu\rangle_v \;=\; 0,
\end{equation}
and $\langle v\,f_\mu\rangle_v = \rho_\mu E_\mu + \varepsilon\,\langle v\,g_\mu\rangle_v$. Using this in the continuity equation \eqref{eq:continuous_macro_equation} gives the macro equation
\begin{equation}\label{eq:continuous_macro_update}
    \partial_t\rho_\mu \;+\; \partial_x(\rho_\mu E_\mu) \;+\; \varepsilon\,\partial_x\langle v\,g_\mu\rangle_v \;=\; 0.
\end{equation}
On the other hand, substituting \eqref{eq:continuous_macro_micro_split} into \eqref{eq:VPFP_flux} yields
\begin{equation} \label{eq:continuous_micro_update1}
\partial_tM_\mu+\varepsilon \partial_t g_\mu+v\,\partial_x M_\mu+\varepsilon v\,\partial_x g_\mu=\mathcal{L}[E_\mu](g_\mu).
\end{equation}
Since $M_\mu$ depends on time only through $(\rho_\mu,E_\mu)$, we have $\partial_t E_\mu=\mathcal{P}(\partial_t\rho_\mu)$ and $\partial_t M_\mu = \partial_\rho M_\mu\,\partial_t\rho_\mu + \partial_E M_\mu\,\mathcal{P}(\partial_t\rho_\mu)$, where $\partial_t\rho_\mu$ is represented using \eqref{eq:continuous_macro_update}. Substituting this $\partial_t M_\mu$ into \eqref{eq:continuous_micro_update1} gives the micro equation
\begin{equation}\label{eq:continuous_micro_update}
\begin{aligned}
    \partial_t g_\mu \;&+\; v\,\partial_x g_\mu
    \;-\; \partial_\rho M_\mu\,\partial_x\langle v\,g_\mu\rangle_v
    \;-\; \partial_E M_\mu\,\mathcal{P}\bigl(\partial_x\langle v\,g_\mu\rangle_v\bigr) \\
    &+\; \frac{1}{\varepsilon}\Bigl(
    v\,\partial_x M_\mu
    \;-\; \partial_\rho M_\mu\,\partial_x(\rho_\mu E_\mu)
    \;-\; \partial_E M_\mu\,\mathcal{P}\bigl(\partial_x(\rho_\mu E_\mu)\bigr)
    \Bigr) \\
    &\;=\; \frac{1}{\varepsilon}\,\mathcal{L}[E_\mu](g_\mu).
\end{aligned}
\end{equation}

Equations \eqref{eq:continuous_macro_update} and \eqref{eq:continuous_micro_update} constitute the micro--macro formulation of \eqref{eq:VPFP_flux}, from which the formal leading-order limit follows directly: as $\varepsilon\to0$, \eqref{eq:continuous_macro_update} reduces to \eqref{eq:continuous_high_field_limit}, while \eqref{eq:continuous_micro_update} yields
\begin{equation}\label{eq:continuous_first_order_correction}
    \mathcal{L}[E_\mu](g_\mu) \;=\; v\,\partial_x M_\mu \;-\; \partial_\rho M_\mu\,\partial_x(\rho_\mu E_\mu) \;-\; \partial_E M_\mu\,\mathcal{P}\bigl(\partial_x(\rho_\mu E_\mu)\bigr).
\end{equation}
The right-hand side of the above equation has zero velocity moment, since $\langle vM_\mu\rangle_v=\rho_\mu E_\mu$, $\langle\partial_\rho M_\mu\rangle_v=1$, and $\langle\partial_E M_\mu\rangle_v=0$, and therefore satisfies the solvability condition for $\mathcal{L}[E_\mu]$. Formally solving \eqref{eq:continuous_first_order_correction} under the constraint \eqref{eq:zero_moment} gives $g_\mu$ in terms of $(\rho_\mu,E_\mu)$ and provides the $O(\varepsilon)$ closure term in \eqref{eq:continuous_macro_update}. 

\section{The Micro-ROM}
\label{sec:micro_rom_semi_discrete}

In this section, we construct a reduced-order model for the micro--macro system \eqref{eq:continuous_macro_update} and \eqref{eq:continuous_micro_update}, in which we keep the density $\rho_{\mu}$ at full order and seek a reduced representation of $g_{\mu}$. For notational convenience, we suppress the dependence on the parameter $\mu$ whenever it is clear from the context.

In the offline stage, we solve the full-order problem \eqref{eq:continuous_macro_update} and \eqref{eq:continuous_micro_update} for a finite set of training parameters, $\Theta_{\mathrm{train}}:=\{\mu_j\}_{j=1}^{N_{\mathrm{train}}}\subset\Theta$ and use the resulting micro solutions to construct a reduced basis. In the online stage, the micro solution corresponding to a new query $\mu\in\Theta$ is approximated within this fixed reduced space. The collection of solutions to be approximated is therefore
\begin{equation}\label{eq:micro_manifold}
    \mathcal{G} \;:=\; \bigl\{\, g_\mu(t,\cdot,\cdot) \;:\; \mu\in\Theta,\;\; 0\le t\le T \,\bigr\},
\end{equation}
where $T$ is a prescribed final time. The approach relies on the assumption that the set of solutions encountered as the time and parameter vary can be well approximated by a single low-dimensional subspace \cite{quarteroni2011certified}.

\subsection{Galerkin reduction}
\label{sec:galerkin_reduction}

Let $\Phi=(\phi_1,\dots,\phi_r)$ be the basis functions of $(x,v)$ spanning an $r$-dimensional subspace that approximates $\mathcal{G}$, orthonormal in the inner product $\langle u,w\rangle_{xv}:=\int_{\Omega_x}\int_{\mathbb{R}} u\,w\,\dv\,\dx$. They are constructed offline from full-order snapshots of $g_{\mu}$ at the training parameters. For a new query $\mu$, we approximate $g_\mu$ in their span,
\begin{equation}\label{eq:reduced_ansatz}
    g_\mu(t,x,v) \;\approx\; \sum_{i=1}^r a_{\mu,i}(t)\,\phi_i(x,v)  \;:=\;  \Phi(x,v)\,\mathbf{a}_\mu(t),
\end{equation}
where the time and parameter dependence is carried by the coefficients $\mathbf{a}_\mu(t)=(a_{\mu,1}(t),\dots,a_{\mu,r}(t))^\top$. 

Substituting \eqref{eq:reduced_ansatz} into \eqref{eq:continuous_macro_update} yields 
\begin{equation}\label{eq:micro_rom_macro}
    \partial_t\rho + \partial_x\underbrace{(\rho E)}_{=: \mathcal J_0} + \varepsilon\,\partial_x\bigl(\langle v\,\Phi\rangle_v\,\mathbf{a}\bigr) = 0.
\end{equation}
Substituting \eqref{eq:reduced_ansatz} into \eqref{eq:continuous_micro_update} and conducting the Galerkin projection $\langle \phi_i, \cdot \rangle_{xv}$ yields
\begin{equation}\label{eq:micro_rom_micro_ode}
\begin{aligned}
    \frac{\rd a_i}{\rd t}
    \;&+\; \bigl\langle \phi_i,\, v\,\partial_x\Phi \bigr\rangle_{xv}\,\mathbf{a} +\; \frac{1}{\varepsilon}\underbrace{\langle\phi_i,\,v\,\partial_x M\rangle_{xv}}_{=: \mathcal J_{1,i}} \\[1ex]
    &-\; \frac{1}{\varepsilon}\underbrace{\Bigl\langle \phi_i,\; \partial_\rho M\,\partial_x\mathcal J_0 + \partial_E M\,\mathcal{P}(\partial_x\mathcal J_0) \Bigr\rangle_{xv}}_{=: \mathcal J_{2,i}} \\[0.5ex]
    &-\; \underbrace{\Bigl\langle \phi_i,\; \partial_\rho M\,\partial_x(\langle v\,\Phi\rangle_v\mathbf{a}) + \partial_E M\,\mathcal{P}(\partial_x(\langle v\,\Phi\rangle_v\mathbf{a})) \Bigr\rangle_{xv}}_{=: \mathcal J_{3,i}} \\[1ex]
    &=\; \frac{1}{\varepsilon}\,\bigl\langle \phi_i,\, \mathcal{L}[E]\Phi \bigr\rangle_{xv}\,\mathbf{a}, \quad i=1,\dots, r,
\end{aligned}
\end{equation}
where $E=\mathcal{P}(\rho-\eta)$, and $M=M(\rho,E)=\frac{\rho(t,x)}{\sqrt{2\pi}}\,\exp\left(-\frac{(v - E(t,x))^2}{2}\right)$.

The resulting Micro-ROM evolves the coupled system \eqref{eq:micro_rom_macro} and \eqref{eq:micro_rom_micro_ode}. While some terms, such as $\langle\phi_i, v\,\partial_x\Phi\rangle_{xv}$ and $\langle v\,\Phi\rangle_v$, depend only on the basis functions and can be computed offline, the terms involving the Maxwellian and the collision operator depend on the evolving electric field $E$. Direct evaluation of these terms would typically incur an online computational cost that depends on the velocity-grid resolution. Our goal is to exploit their structure to eliminate velocity-grid operations from the online time evolution without relying on empirical interpolation or online velocity quadrature. In the next subsection, we express the Maxwellian terms using precomputed maps evaluated at the local electric field. The collision projection is treated after discretization in \S\ref{sec:separable_fp_new}. 

\subsection{Reduction of Maxwellian terms}
\label{sec:field_maps}

Here, we express the velocity integrals in $\mathcal J_{1,i}$, $\mathcal J_{2,i}$, and $\mathcal J_{3,i}$ through precomputable one-dimensional maps of the local field value $E$, thereby eliminating the need for online velocity integration.

For $\mathcal J_{1,i}$, integration by parts in $x$ over the periodic domain gives
\begin{equation}\label{eq:transport_ibp}
\begin{aligned}
    \mathcal J_{1,i}
    &= -\,\bigl\langle v\,\partial_x\phi_i,\, M\bigr\rangle_{xv}
    = -\frac{1}{\sqrt{2\pi}}\int_{\Omega_x} \rho \int_{\mathbb{R}} v\,\partial_x\phi_i\, e^{-(v-E)^2/2}\,\dv\,\dx \\ 
    &:=\int_{\Omega_x} \rho\, h_{1,i}(x,E)\,\dx,
\end{aligned}
\end{equation}
where
\begin{equation}\label{eq:h1_kernel_def}
    h_{1,i}(x,E) =-\frac{1}{\sqrt{2\pi}}\int_{\mathbb{R}} \partial_x\phi_i \,v\; e^{-(v-E)^2/2}\,\dv.
\end{equation}

For $\mathcal J_{2,i}$, the prefactors $\partial_x\mathcal J_0$ and $\mathcal{P}(\partial_x\mathcal J_0)$ do not depend on $v$ and come outside the velocity integrals, 
\begin{equation}\label{eq:tangent_expand}
\begin{aligned}
    \mathcal J_{2,i}
    &= \int_{\Omega_x}\Bigl[\, \partial_x\mathcal J_0 \,\frac{1}{\sqrt{2\pi}}\int_{\mathbb{R}}\phi_i\,e^{-(v-E)^2/2}\,\dv \\
    &\qquad + \rho\,\mathcal{P}(\partial_x\mathcal J_0)\,\frac{1}{\sqrt{2\pi}} \int_{\mathbb{R}}\phi_i\,(v-E)\,e^{-(v-E)^2/2}\,\dv \,\Bigr]\dx\\
    &:=\int_{\Omega_x}\bigl[\, \partial_x\mathcal J_0\,h_{2,i}(x,E) + \rho\,\mathcal{P}(\partial_x\mathcal J_0)\,\partial_E h_{2,i}(x,E) \,\bigr]\dx,
\end{aligned}
\end{equation}
where
\begin{equation}\label{eq:h2_kernel_def}
    h_{2,i}(x,E) = \frac{1}{\sqrt{2\pi}}\int_{\mathbb{R}}\phi_i\,e^{-(v-E)^2/2}\,\dv,
\end{equation}
and
\begin{equation} \label{eq:h2x_kernel_def}
    \partial_E h_{2,i}(x,E) = \frac{1}{\sqrt{2\pi}}\int_{\mathbb{R}}\phi_i\, (v-E) e^{-(v-E)^2/2}\,\dv.
\end{equation}

For $\mathcal J_{3,i}$, since $\mathbf{a}$ is independent of $(x,v)$ and $\mathcal P$ is linear, we obtain
\begin{equation}\label{eq:micro_tangent_reduced}
\begin{aligned}
    \mathcal J_{3,i}
    = \sum_{j=1}^{r}a_j\int_{\Omega_x}\Bigl[\,
       \partial_x\langle v\,\phi_j\rangle_v\,h_{2,i}(x,E)  + \rho\,\mathcal P\!\bigl(\partial_x\langle v\,\phi_j\rangle_v\bigr)
       \,\partial_E h_{2,i}(x,E)\,\Bigr]\dx.
\end{aligned}
\end{equation}

Figure~\ref{fig:field_map_offline_online} summarizes this reduction. For each fixed $x$, the maps $h_{1,i}(x,\cdot)$, $h_{2,i}(x,\cdot)$, and $\partial_E h_{2,i}(x,\cdot)$ are functions of the scalar field value $E$ through the Gaussian factor. Each map can be approximated offline by a Chebyshev expansion in $E$ over an interval $[E_{\min},E_{\max}]$ covering the range of $E_\mu(t,x)$ across the training trajectories $\mu\in\Theta_{\mathrm{train}}$: 
\begin{equation}\label{eq:h_cheby_new}
    h_{\ell,i}(x,E) \;\approx\; \sum_{k=0}^{K}\hat{h}_{\ell,i,k}(x)\,\mathcal{T}_k\big(s(E)\big), \quad \ell=1,2,
    \qquad
    s(E) = \frac{2E-(E_{\min}+E_{\max})}{E_{\max}-E_{\min}}.
\end{equation}
The expansion for $\partial_E h_{2,i}$ is similar. Here $\mathcal{T}_k$ denotes the Chebyshev polynomial of the first kind of degree $k$. In the online stage, $\mathcal{T}_0,\ldots,\mathcal{T}_K$ are computed by the three-term recurrence and combined with the stored coefficients. Each map evaluation costs $\mathcal{O}(K)$ and requires no velocity integration.

\begin{figure}[htbp]
    \centering
    \includegraphics[width=\textwidth]{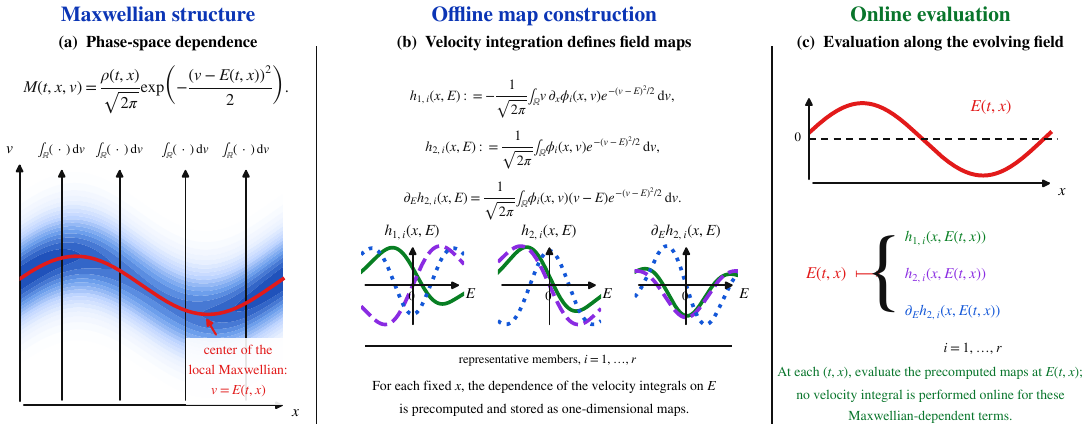}
    \caption{Offline and online treatment of the velocity integrals in $\mathcal J_{1,i}$, $\mathcal J_{2,i}$, and $\mathcal J_{3,i}$. The local Maxwellian is centered at $v=E(t,x)$. For each fixed $x$, velocity integration defines one-dimensional field maps, represented here by $h_{1,i}$, $h_{2,i}$, and $\partial_E h_{2,i}$, which are precomputed offline and evaluated online at the evolving field $E(t,x)$.}
    \label{fig:field_map_offline_online} 
\end{figure}

\begin{remark}
    In \(d_x\) spatial and \(d_v\) velocity dimensions, the velocity integrals appearing in \(\mathcal J_{1,i}\), \(\mathcal J_{2,i}\), and \(\mathcal J_{3,i}\) can be treated similarly by introducing precomputable maps of the local electric field \(E\in\mathbb R^{d_x}\). A tensor-product Chebyshev approximation of degree \(K\) in each field component requires \((K+1)^{d_x}\) coefficients and can be evaluated online without velocity integration.
\end{remark}

\section{Fully discrete Micro-ROM and online assembly}
\label{sec:online_cost_reduction}

\subsection{Time discretization}
\label{sec:imex}

We discretize the reduced system \eqref{eq:micro_rom_macro}--\eqref{eq:micro_rom_micro_ode}, first in time and then in phase space. Let $t^n=n\Delta t$, and use superscript $n$ to denote the numerical solution at time $t^n$. Set $E^n:=\mathcal{P}(\rho^n-\eta)$ and $M^n:=M(\rho^n,E^n)$. The terms $\mathcal J_0^n$ and $\mathcal J_{k,i}^n$, $k=1,2,3$, are those defined in \eqref{eq:micro_rom_macro}--\eqref{eq:micro_rom_micro_ode}, evaluated at $(\rho^n,E^n,\mathbf a^n)$. A first-order implicit--explicit (IMEX) scheme reads
\begin{subequations}\label{eq:rom_semi_discrete}
\begin{align}
    \frac{a_i^{n+1}-a_i^n}{\Delta t}
    &+ \langle\phi_i,v\,\partial_x\Phi\rangle_{xv}\,\mathbf a^n
    + \frac{1}{\varepsilon}\bigl(\mathcal J_{1,i}^n-\mathcal J_{2,i}^n\bigr)
    - \mathcal J_{3,i}^n \nonumber\\
    &= \frac{1}{\varepsilon}\,
       \langle\phi_i,\mathcal L[E^n]\Phi\rangle_{xv}\,\mathbf a^{n+1},
       \qquad i=1,\ldots,r,
    \label{eq:rom_semi_discrete_micro}\\
    \frac{\rho^{n+1}-\rho^n}{\Delta t}
    &+ \partial_x\mathcal J_0^n
    + \varepsilon\,\partial_x\bigl(\langle v\Phi\rangle_v\,\mathbf a^{n+1}\bigr)
    = 0.
    \label{eq:rom_semi_discrete_macro}
\end{align}
\end{subequations}

Note that the above discretization is a natural adaptation of the IMEX scheme used in \cite{LM10,crouseilles2011asymptotic}. When applied to the full-order model \eqref{eq:continuous_macro_update} and \eqref{eq:continuous_micro_update}, the resulting scheme is asymptotic-preserving; that is, it automatically reduces to a first-order forward Euler scheme applied to the limiting model discussed in \S\ref{sec:continuous_setup}, including the treatment of the $O(\varepsilon)$ term. For completeness, we describe the FOM in the fully discrete form in Appendix~\ref{sec:fom_imex_new}.

\subsection{Phase-space discretization}
\label{sec:semi_discrete}

We now introduce the phase-space discretization for the semi-discrete scheme \eqref{eq:rom_semi_discrete}. The periodic Poisson equation is discretized by a Fourier method, applied to the mean-zero part of the source, with the zero mode of the potential set to zero; we retain the notation $\mathcal{P}$ for the resulting grid map. Let $x_p = x_{\min} + (p-\tfrac12)\Delta x$ ($p=1,\dots,N_x$) and $v_q = v_{\min} + (q-\tfrac12)\Delta v$ ($q=1,\dots,N_v$), with $\Delta x=(x_{\max}-x_{\min})/N_x$ and $\Delta v=(v_{\max}-v_{\min})/N_v$, periodic in $x$ and zero-flux in $v$. Grid functions are written in boldface: phase-space quantities in $\mathbb{R}^N$ ($N=N_xN_v$), spatial quantities in $\mathbb{R}^{N_x}$. For phase-space grid functions $\mathbf z,\mathbf z'\in\mathbb R^N$, the velocity moment and inner product are
\begin{equation}\label{eq:discrete_inner_products}
    \bigl(\intv{\mathbf z}\bigr)_p := \Delta v\sum_{q=1}^{N_v} z_{p,q}, \qquad
    \langle\mathbf z,\mathbf z'\rangle_{\Delta x\Delta v} := \Delta x\,\Delta v\,\mathbf z^\top\mathbf z'.
\end{equation}
The induced norm is $\norm{\mathbf z}_{\Delta x\Delta v}:=\langle\mathbf z,\mathbf z\rangle_{\Delta x\Delta v}^{1/2}$. For spatial grid functions $\mathbf z\in\mathbb R^{N_x}$, we use $\norm{\mathbf z}_{\Delta x}:=(\Delta x\,\mathbf z^\top\mathbf z)^{1/2}$.

We discretize $v\,\partial_x$ using a first-order upwind scheme and represent the resulting discrete operator by the matrix $A$. With the nonnegative
splitting $v_q^+ := \max(v_q,0)$ and $v_q^- := \max(-v_q,0)$, so that
$v_q=v_q^+-v_q^-$,
\begin{equation}\label{eq:Ah_upwind_new}
    (A\mathbf z)_{p,q} \;=\; \frac{v_q^+(z_{p,q} - z_{p-1,q}) - v_q^-(z_{p+1,q} - z_{p,q})}{\Delta x},
\end{equation}
with periodic boundary conditions.

The Maxwellian is sampled on the velocity grid,
\begin{equation}\label{eq:sampled_maxwellian_new}
    M_{p,q} \;:=\; \frac{\rho_p}{\sqrt{2\pi}}\,\exp\!\left(-\frac{(v_q - E_p)^2}{2}\right).
\end{equation}
We discretize the macroscopic flux divergence $\partial_x\mathcal J_0=\langle v\partial_x M\rangle_v$ by taking the velocity moment of the same upwind operator:
\begin{equation}\label{eq:flux_moment_identity}
    \bigl(\intv{A\mathbf{M}}\bigr)_p
    = \frac{F_{p+1/2}-F_{p-1/2}}{\Delta x},
    \qquad p=1,\ldots,N_x,
\end{equation}
where
\begin{equation}\label{eq:Q1_flux}
    F_{p+1/2}
    = \Delta v\sum_{q=1}^{N_v}
    \bigl(v_q^+M_{p,q}-v_q^-M_{p+1,q}\bigr)
\end{equation}
is the first-order kinetic flux-vector-splitting (KFVS) flux \cite{MR1248055}.

The generalized Fokker--Planck operator $\mathcal{L}[E]$ is discretized as the field-dependent matrix $L[\mathbf{E}]$ in conservative flux form, with an interface Maxwellian $M_{p,q+1/2}$: at interior cells $q=2,\ldots,N_v-1$,
\begin{equation}\label{eq:fp_flux_difference_new}
\begin{aligned}
    (L[\mathbf E]\mathbf z)_{p,q}
    = \frac{1}{\Delta v^2}\Bigl[
    &M_{p,q+1/2}\left(\frac{z_{p,q+1}}{M_{p,q+1}}-\frac{z_{p,q}}{M_{p,q}}\right)\\
    &-M_{p,q-1/2}\left(\frac{z_{p,q}}{M_{p,q}}-\frac{z_{p,q-1}}{M_{p,q-1}}\right)
    \Bigr],
\end{aligned}
\end{equation}
with no-flux boundary conditions. Natural choices for $M_{p,q+1/2}$ are the geometric and arithmetic means; we use the geometric mean $M_{p,q+1/2}=\sqrt{M_{p,q}\,M_{p,q+1}}$ following \cite{zhang2026separable}.

\subsection{POD basis and fully discrete Micro-ROM}
\label{sec:pod_basis}

Assuming $T/\Delta t$ is an integer, let $N_T:=T/\Delta t$ denote the number of time steps. The FOM described in Appendix~\ref{sec:fom_imex_new} produces discrete fluctuations $\mathbf{g}_{\mu_j}^n$ at the time levels $t^n$, $n=0,\ldots,N_T$. Arranging these snapshots in columns gives the snapshot matrix
\[
    S_g
    :=
    \bigl[\,\mathbf{g}_{\mu_j}^n\,\bigr]_{\substack{j=1,\ldots,N_{\mathrm{train}}\\ n=0,\ldots,N_T}}
    \in\mathbb{R}^{N\times N_s}.
\]
Here $N_s=N_{\mathrm{train}}(N_T+1)$. The basis $\Phi=[\boldsymbol{\phi}_1,\ldots,\boldsymbol{\phi}_r]\in\mathbb{R}^{N\times r}$ consists of the $r$ leading left singular vectors of $S_g$ in the inner product \eqref{eq:discrete_inner_products}. Its span minimizes the mean-square snapshot projection error, and
\[
  \langle\boldsymbol{\phi}_i,\boldsymbol{\phi}_j\rangle_{\Delta x\Delta v}=\delta_{ij}.
\]
The basis is computed once and reused for every new query $\mu\in\Theta$; the parameter dependence is carried by the density $\boldsymbol{\rho}_\mu^n$, field $\mathbf{E}_\mu^n$, and reduced coefficients $\mathbf{a}_\mu^n$.

For a phase-space grid function $\mathbf z$ and a spatial vector $\mathbf u$, products are understood cellwise: $(\mathbf z\,\mathbf u)_{p,q}:=z_{p,q}u_p$.  
Substituting the discrete operators introduced in \S\ref{sec:semi_discrete} into the time-discrete scheme \eqref{eq:rom_semi_discrete},
\[
\begin{aligned}
    v\,\partial_x&\to A,
    &\mathcal{L}[E^n]&\to L[\mathbf{E}^n],\\
    \partial_x\mathcal J_0^n\big|_{x_p}
    &\to\bigl(\intv{A\mathbf{M}^n}\bigr)_p,
    &\langle\cdot,\cdot\rangle_{xv}
    &\to\langle\cdot,\cdot\rangle_{\Delta x\Delta v},
\end{aligned}
\]
we obtain the fully discrete Micro-ROM:
\begin{subequations}\label{eq:rom_fully_discrete}
\begin{align}
    \frac{a_i^{n+1}-a_i^n}{\Delta t}
    &+ \langle\boldsymbol{\phi}_i,A\Phi\rangle_{\Delta x\Delta v}\,\mathbf a^n
    + \frac{1}{\varepsilon}\bigl(\mathcal J_{1,i,h}^n-\mathcal J_{2,i,h}^n\bigr)
    - \mathcal J_{3,i,h}^n \nonumber\\
    &= \frac{1}{\varepsilon}
       \langle\boldsymbol{\phi}_i,L[\mathbf E^n]\Phi\rangle_{\Delta x\Delta v}
       \,\mathbf a^{n+1},
       \qquad i=1,\ldots,r,
    \label{eq:rom_fully_discrete_micro}\\
    \frac{\boldsymbol{\rho}^{n+1}-\boldsymbol{\rho}^n}{\Delta t}
    &+ \intv{A\mathbf M^n}
    + \varepsilon\,\intv{A\Phi}\,\mathbf a^{n+1}
    = \mathbf 0,
    \label{eq:rom_fully_discrete_macro}
\end{align}
\end{subequations}
where $\mathbf E^n=\mathcal P(\boldsymbol{\rho}^n-\boldsymbol{\eta})$ and $\mathbf M^n=\mathbf M(\boldsymbol{\rho}^n,\mathbf E^n)$. 

\subsection{Grid form of the Maxwellian terms}
\label{sec:compression_new}

We now express the macroscopic flux divergence $\intv{A\mathbf M^n}$ and the terms $\mathcal J_{1,i,h}^n$, $\mathcal J_{2,i,h}^n$, and $\mathcal J_{3,i,h}^n$ in \eqref{eq:rom_fully_discrete} through precomputable field maps. These are essentially the fully discrete version of the maps introduced in \S\ref{sec:field_maps}.

\paragraph{(i) Macro flux}
For the interface flux \eqref{eq:Q1_flux}, define the sampled half-flux maps
\begin{equation}\label{eq:flux_maps_def}
    H_0^\pm(E) := \frac{\Delta v}{\sqrt{2\pi}}\sum_qv_q^\pm e^{-(v_q-E)^2/2}.
\end{equation}
Then
\begin{equation}\label{eq:flux_maps_new}
    F_{p+1/2}
    = \rho_pH_0^+(E_p)-\rho_{p+1}H_0^-(E_{p+1}),
\end{equation}
and \eqref{eq:flux_moment_identity} becomes
\begin{equation}\label{eq:macro_flux_maps_new}
\begin{aligned}
    \bigl(\intv{A\mathbf{M}}\bigr)_p
    = \frac{1}{\Delta x}\Bigl[&\rho_p H_0^+(E_p)-\rho_{p+1} H_0^-(E_{p+1})\\
    &-\rho_{p-1} H_0^+(E_{p-1})+\rho_p H_0^-(E_p)\Bigr].
\end{aligned}
\end{equation}

\paragraph{(ii) Projected Maxwellian terms}
With $A$ defined in \eqref{eq:Ah_upwind_new}, we have
\[
    \langle\boldsymbol\phi_i,A\mathbf M\rangle_{\Delta x\Delta v}
    =
    \langle A^\top\boldsymbol\phi_i,\mathbf M\rangle_{\Delta x\Delta v},
\]
where $A^\top$ approximates $-v\,\partial_x$. Then
the grid counterparts of \eqref{eq:h1_kernel_def}, \eqref{eq:h2_kernel_def}, and \eqref{eq:h2x_kernel_def} are given by
\begin{equation}\label{eq:h1_grid_def} 
    H_{1,i}(x_p,E)
    := \frac{\Delta v}{\sqrt{2\pi}}\sum_q
       (A^\top\boldsymbol{\phi}_i)_{p,q}\,e^{-(v_q-E)^2/2},
\end{equation}
and
\begin{equation}\label{eq:h2_grid_def}
\begin{aligned}
    H_{2,i}(x_p,E) &:= \frac{\Delta v}{\sqrt{2\pi}}\sum_q \phi_{i,p,q}\,e^{-(v_q-E)^2/2}, \\
    \partial_E H_{2,i}(x_p,E) &= \frac{\Delta v}{\sqrt{2\pi}}\sum_q \phi_{i,p,q}\,(v_q-E)\,e^{-(v_q-E)^2/2}.
\end{aligned}
\end{equation}
Therefore, $\mathcal J_{1,i,h}$, $\mathcal J_{2,i,h}$, and $\mathcal J_{3,i,h}$ (i.e., the grid counterparts of \eqref{eq:transport_ibp}, \eqref{eq:tangent_expand}, and \eqref{eq:micro_tangent_reduced}) are given by
\begin{align}
    \mathcal J_{1,i,h}
    &= \Delta x\sum_p\rho_p\,H_{1,i}(x_p,E_p),
    \label{eq:transport_maps}\\
    \mathcal J_{2,i,h}
    &= \Delta x\sum_p\Bigl[
      \bigl(\intv{A\mathbf M}\bigr)_pH_{2,i}(x_p,E_p)\nonumber\\
    &\qquad +\rho_p\,\bigl(\mathcal P(\intv{A\mathbf M})\bigr)_p
      \,\partial_E H_{2,i}(x_p,E_p)\Bigr],
    \label{eq:tangent_grid}\\
    \mathcal J_{3,i,h}
    &= \Delta x\sum_p\Bigl[
      \bigl(\intv{A\Phi}\mathbf a\bigr)_p\,H_{2,i}(x_p,E_p)\nonumber\\
    &\qquad +\rho_p\,\bigl(\mathcal P(\intv{A\Phi}\mathbf a)\bigr)_p
      \,\partial_E H_{2,i}(x_p,E_p)\Bigr].
    \label{eq:micro_tangent_grid}
\end{align}

Each of the maps $H_{0}^{\pm}(E)$, $H_{1,i}(x_p,E)$, $H_{2,i}(x_p,E)$, and $\partial_E H_{2,i}(x_p,E)$ above is approximated using a degree-$K$ Chebyshev expansion. The resulting assembly of $\intv{A\mathbf M}$ and $\mathcal J_{1,i,h}$, $\mathcal J_{2,i,h}$, $\mathcal J_{3,i,h}$ for all $i$ costs $\mathcal O(N_xrK)$ per time step and requires no summation over the velocity grid.

\subsection{Affine structure of the collision matrix}
\label{sec:separable_fp_new}

The remaining velocity-grid operation is the reduced collision term $\langle\boldsymbol{\phi}_i,L[\mathbf E^n]\Phi\rangle_{\Delta x\Delta v}\,\mathbf a^{n+1}$ in \eqref{eq:rom_fully_discrete_micro}. Since the basis is orthonormal, this step needs to invert an $r\times r$ linear system for $\mathbf{a}^{n+1}$ with coefficient matrix $I_r-\tfrac{\Delta t}{\varepsilon}L_r^n$, where $I_r$ is the identity matrix and $(L_r^n)_{ij}:=\langle\boldsymbol{\phi}_i,L[\mathbf{E}^n]\boldsymbol{\phi}_j\rangle_{\Delta x\Delta v}$. Direct assembly of $L_r^n$ costs $\mathcal{O}(N_xN_vr^2)$. We next show that our discretization of the Fokker--Planck operator using geometric-mean interface reduces this to $\mathcal{O}(N_x r^2)$. 

Substituting $M_{p,q+1/2}=\sqrt{M_{p,q}M_{p,q+1}}$ into the flux form \eqref{eq:fp_flux_difference_new} gives, at interior cells $q=2,\ldots,N_v-1$, the three-point stencil
\begin{equation}\label{eq:fp_ratio_stencil_new}
\begin{aligned}
(L[\mathbf{E}]\mathbf{g})_{p,q}
&\;=\;
\frac{1}{\Delta v^2}
\left[\sqrt{\frac{M_{p,q}}{M_{p,q+1}}}\,g_{p,q+1} +\;
\sqrt{\frac{M_{p,q}}{M_{p,q-1}}}\,g_{p,q-1}\right.\\
&\left.\qquad-\;
\left(\sqrt{\frac{M_{p,q+1}}{M_{p,q}}} + \sqrt{\frac{M_{p,q-1}}{M_{p,q}}}\right) g_{p,q}
\right],
\end{aligned}
\end{equation}
whose field dependence enters only through the ratios of neighboring Maxwellian samples $\sqrt{M_{p,q}/M_{p,q\pm1}}$. With the sampled Maxwellian \eqref{eq:sampled_maxwellian_new} and $v_{q+1}=v_q+\Delta v$, each separates into a velocity factor and a field factor:
\begin{equation}\label{eq:alpha_beta_def_new}
\begin{aligned}
    \sqrt{\frac{M_{p,q}}{M_{p,q+1}}}
    &= \exp\!\left(\frac{(v_{q+1}-E_p)^2-(v_q-E_p)^2}{4}\right) \\
    &= \exp\!\left(\frac{\Delta v}{4}(v_q+v_{q+1})-\frac{\Delta v}{2}\,E_p\right) \\
    &= \underbrace{\exp\!\left(\frac{\Delta v}{4}(v_q+v_{q+1})\right)}_{:=\,\alpha_{q+1/2}}
    \;\cdot\;
    \underbrace{\exp\!\left(-\frac{\Delta v}{2}\,E_p\right)}_{:=\,\beta(E_p)},
\end{aligned}
\end{equation}
where $\alpha_{q+1/2}>0$ depends only on the velocity grid and $\beta(E_p)>0$ only on the local field value $E_p$. Substituting $\sqrt{M_{p,q}/M_{p,q+1}}=\alpha_{q+1/2}\beta(E_p)$ and $\sqrt{M_{p,q-1}/M_{p,q}}=\alpha_{q-1/2}\beta(E_p)$, and their reciprocals, into \eqref{eq:fp_ratio_stencil_new} gives
\begin{equation}\label{eq:fp_discrete_separable_new}
\begin{aligned}
(L[\mathbf{E}]\mathbf{g})_{p,q}
\;=\;
\frac{1}{\Delta v^2}
\Big[&\alpha_{q+1/2}\,\beta(E_p)\,g_{p,q+1} +\;
\alpha_{q-1/2}^{-1}\,\beta(E_p)^{-1}\,g_{p,q-1}\\
&-\;
\Big(\alpha_{q+1/2}^{-1}\,\beta(E_p)^{-1}
     + \alpha_{q-1/2}\,\beta(E_p)\Big) g_{p,q}
\Big],
\end{aligned}
\end{equation}
with the boundary cases at $q=1, N_v$ obtained by dropping the missing interface fluxes, that is, all terms carrying $\alpha_{1/2}^{\pm1}$ or $\alpha_{N_v+1/2}^{\pm1}$.

Define the constant velocity-space matrices $V_1, V_2\in\mathbb{R}^{N_v\times N_v}$ by their interior action at $q =2,\dots, N_v-1$,
\begin{equation}\label{eq:T01_def_new}
\begin{aligned}
    (V_1\mathbf{z})_q \;&:=\; \frac{\alpha_{q+1/2}\,z_{q+1} - \alpha_{q-1/2}\,z_q}{\Delta v^2},\\
    (V_2\mathbf{z})_q \;&:=\; \frac{\alpha_{q-1/2}^{-1}\,z_{q-1} - \alpha_{q+1/2}^{-1}\,z_q}{\Delta v^2},
\end{aligned}
\end{equation}
The collision matrix $L[\mathbf E]$ is block-diagonal in $x$, with $p$-th block given by
\begin{equation}\label{eq:L_block_diag_new}
    [L[\mathbf{E}]]_{pp} \;=\; \beta(E_p)\,V_1 + \beta(E_p)^{-1}\,V_2 \;\in\;\mathbb{R}^{N_v\times N_v}.
\end{equation}
Equivalently,
\[
    L[\mathbf E]=B[\mathbf E]\otimes V_1+B[\mathbf E]^{-1}\otimes V_2,
\]
where $B[\mathbf E]:=\operatorname{diag}\bigl(\beta(E_1),\ldots,\beta(E_{N_x})\bigr)$. Summing over the spatial cells gives the reduced entries
\begin{equation}\label{eq:Lg_affine_new}
    \langle\boldsymbol{\phi}_i, L[\mathbf{E}]\boldsymbol{\phi}_j\rangle_{\Delta x\Delta v}
    = \sum_{p=1}^{N_x}\Big[\beta(E_p)\,\langle\boldsymbol{\phi}_i, V_1\boldsymbol{\phi}_j\rangle_p + \beta(E_p)^{-1}\,\langle\boldsymbol{\phi}_i, V_2\boldsymbol{\phi}_j\rangle_p\Big],
\end{equation}
where
\[
\langle\boldsymbol{\phi}_i,V_\ell\boldsymbol{\phi}_j\rangle_p
    :=
    \Delta x\,\Delta v\sum_q
    \phi_{i,p,q}(V_\ell\boldsymbol{\phi}_j)_{p,q}, \qquad \ell=1,2.
\]
Here $V_\ell$ acts on the velocity vector in each spatial cell. These field-independent blocks can be precomputed offline. The reduced collision matrix $L_r^n$ with entry
\[
    (L_r^n)_{ij}    =\langle\boldsymbol{\phi}_i,L[\mathbf{E}^n]\boldsymbol{\phi}_j\rangle_{\Delta x\Delta v}
\]
is then assembled with coefficients $\beta(E_p^n)^{\pm1}$ at a cost of $\mathcal{O}(N_xr^2)$, independent of $N_v$.

\subsection{Algorithm, complexity, mass conservation, and asymptotic limit}
\label{sec:algorithm_new}

We summarize the fully discrete Micro-ROM in Algorithm~\ref{alg:micro_rom_step_new}.

\begin{algorithm}[htbp]
\caption{The Micro-ROM: offline precomputation and online step}
\label{alg:micro_rom_step_new}
\begin{algorithmic}[1]
\Statex \textbf{Offline}:
\State For each training parameter $\mu_j\in\Theta_{\mathrm{train}}$, run the FOM (Appendix~\ref{sec:fom_imex_new}) and collect $\mathbf{g}_{\mu_j}^n$ over the time levels $t^n$. Compute the POD basis  $\Phi\in\mathbb{R}^{N\times r}$.
\State Precompute the transport entries $\langle\boldsymbol{\phi}_i,A\boldsymbol{\phi}_j\rangle_{\Delta x\Delta v}$, the moment $\intv{A\Phi}\in\mathbb{R}^{N_x\times r}$, and the cell-$p$ blocks $\langle\boldsymbol{\phi}_i, V_\ell\boldsymbol{\phi}_j\rangle_p$ in \eqref{eq:Lg_affine_new}.
\State Construct the degree-$K$ Chebyshev approximations of $H_0^\pm$ \eqref{eq:flux_maps_def}, $H_{1,i}$ \eqref{eq:h1_grid_def}, $H_{2,i}$, $\partial_E H_{2,i}$ \eqref{eq:h2_grid_def}.
\end{algorithmic}
\begin{algorithmic}[1]
\Statex \textbf{Online}: For a new query $\mu\in\Theta$, evolve from $t^n\to t^{n+1}$: given $\boldsymbol{\rho}_\mu^n$, $\mathbf{a}_\mu^n$, $\mathbf{E}_\mu^n$, output $\boldsymbol{\rho}_\mu^{n+1}$, $\mathbf{a}_\mu^{n+1}$, $\mathbf{E}_\mu^{n+1}$.
\State Evaluate the interface fluxes $F^n_{p+1/2}$ \eqref{eq:flux_maps_new} at $(\boldsymbol{\rho}^n, \mathbf{E}^n)$ using the degree-$K$ map approximations, hence the macro flux divergence $\intv{A\mathbf{M}^n}$ \eqref{eq:macro_flux_maps_new}.
\State Form $\intv{A\Phi}\,\mathbf{a}^n$, compute $\mathcal{P}(\intv{A\mathbf{M}^n})$ and $\mathcal{P}(\intv{A\Phi}\,\mathbf{a}^n)$. 
\State Evaluate $\mathcal J_{1,i,h}^n$, $\mathcal J_{2,i,h}^n$, and $\mathcal J_{3,i,h}^n$ by \eqref{eq:transport_maps}, \eqref{eq:tangent_grid}, and \eqref{eq:micro_tangent_grid}, using the degree-$K$ map approximations and the quantities computed in online steps~1--2. 
\State Assemble the reduced collision matrix $L_r^n$ at the current field $\mathbf{E}^n$ via \eqref{eq:Lg_affine_new}.
\State Solve the $r\times r$ micro system \eqref{eq:rom_fully_discrete_micro} for $\mathbf{a}^{n+1}$, with coefficient matrix $I_r - \tfrac{\Delta t}{\varepsilon}\,L_r^n$.
\State Macro density update \eqref{eq:rom_fully_discrete_macro}: $\boldsymbol{\rho}^{n+1} \leftarrow \boldsymbol{\rho}^n - \Delta t\,\bigl(\intv{A\mathbf{M}^n} + \varepsilon\,\intv{A\Phi}\,\mathbf{a}^{n+1}\bigr)$.
\State Field update: $\mathbf{E}^{n+1} \leftarrow \mathcal{P}(\boldsymbol{\rho}^{n+1} - \boldsymbol{\eta})$.
\end{algorithmic}
\end{algorithm}

\paragraph{Cost per time step}
Online work splits as follows:
\begin{itemize}
\item Cellwise maps (steps 1, 3): $\mathcal{O}(N_x r K)$.
\item Reduced collision assembly (step 4): $\mathcal{O}(N_x r^2)$.
\item FFT-based Poisson solves (steps 2, 7), moment products and macro update (steps 2, 6), and solution of the $r\times r$ linear system (step 5): $\mathcal{O}(N_x\log N_x)+\mathcal{O}(N_xr)+\mathcal{O}(r^3)$. 
\end{itemize}
The total online complexity per time step is therefore $\mathcal{O}(N_xrK+N_xr^2+r^3+N_x\log N_x)$, independent of $N_v$. Per time step cost of the FOM, by contrast, scales as $\mathcal{O}(N_xN_v)$.

\paragraph{Mass conservation} \begin{proposition}\label{prop:micro_rom_mass_conservation}
For every $n\ge 0$, the macro update \eqref{eq:rom_fully_discrete_macro} satisfies
\begin{equation}\label{eq:micro_rom_mass_conservation}
    \Delta x\sum_p\rho_p^{n+1}\;=\;\Delta x\sum_p\rho_p^n.
\end{equation}
\end{proposition}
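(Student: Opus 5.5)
Summing the macro update \eqref{eq:rom_fully_discrete_macro} over $p=1,\dots,N_x$ and multiplying by $\Delta x$ gives
\[
\Delta x\sum_p\rho_p^{n+1}=\Delta x\sum_p\rho_p^n-\Delta t\,\Delta x\sum_p\Bigl[\bigl(\intv{A\mathbf M^n}\bigr)_p+\varepsilon\bigl(\intv{A\Phi}\,\mathbf a^{n+1}\bigr)_p\Bigr].
\]
So it suffices to show that $\sum_p(\intv{A\mathbf z})_p=0$ for every phase-space grid function $\mathbf z\in\mathbb R^N$. Applying this with $\mathbf z=\mathbf M^n$ and with $\mathbf z=\Phi\mathbf a^{n+1}$ finishes the argument, since $\intv{A\Phi}\,\mathbf a^{n+1}=\intv{A(\Phi\mathbf a^{n+1})}$ by linearity of $A$ and of the moment.

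For the key identity, I will exchange the order of summation:
\[
\sum_p(\intv{A\mathbf z})_p=\Delta v\sum_q\sum_p(A\mathbf z)_{p,q}.
\]
For fixed $q$, the upwind formula \eqref{eq:Ah_upwind_new} gives
\[
\Delta x\sum_p(A\mathbf z)_{p,q}=v_q^+\sum_p(z_{p,q}-z_{p-1,q})-v_q^-\sum_p(z_{p+1,q}-z_{p,q}).
\]
Each of these sums telescopes to zero because of the periodic indexing in $p$, i.e. $z_{0,q}=z_{N_x,q}$ and $z_{N_x+1,q}=z_{1,q}$.

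There is an equivalent flux viewpoint for the Maxwellian part. By \eqref{eq:flux_moment_identity}, $\sum_p(\intv{A\mathbf M})_p=\Delta x^{-1}\sum_p(F_{p+1/2}-F_{p-1/2})=0$, again by periodicity of the interface fluxes. If the Chebyshev-approximated maps $H_0^\pm$ are used online, this form matters. The approximate fluxes $\tilde F_{p+1/2}=\rho_p\tilde H_0^+(E_p)-\rho_{p+1}\tilde H_0^-(E_{p+1})$ are still single-valued at each interface and periodic. Consequently \eqref{eq:macro_flux_maps_new} remains a telescoping difference, and conservation holds exactly even with the approximation.

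I expect no real obstacle. The one point needing care is to state explicitly that the conclusion uses only the periodicity and the conservative (flux-difference) form of the transport terms. It does not depend on the particular value of $\mathbf a^{n+1}$, on the collision matrix, or on the accuracy of the Chebyshev maps.
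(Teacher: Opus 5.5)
Your proof is correct and follows the paper's argument: sum the macro update over $p$, swap the order of summation, and use periodic telescoping of the upwind differences in \eqref{eq:Ah_upwind_new}, applied to $\mathbf z=\mathbf M^n$ and $\mathbf z=\Phi\mathbf a^{n+1}$. Your additional flux-form observation is a correct refinement that the paper's proof does not state explicitly: conservation survives replacing $H_0^\pm$ by their Chebyshev approximations, as done online, because each interface flux remains single-valued.
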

\begin{proof}
By periodicity and \eqref{eq:Ah_upwind_new},
\[
    \sum_p\bigl(\intv{A\mathbf z}\bigr)_p
    =\Delta v\sum_q\sum_p(A\mathbf z)_{p,q}
    =0
\]
for every phase-space grid function $\mathbf z$.
Applying this identity to $\mathbf z=\mathbf M^n$ and
$\mathbf z=\Phi\mathbf a^{n+1}$ and summing
\eqref{eq:rom_fully_discrete_macro} over $p$
proves the result.
\par
\end{proof}

\paragraph{Asymptotic limit}
With all discretization parameters and the POD basis fixed,
suppose that $\mathbf a^{n+1}$ remains bounded as $\varepsilon\to0$.
Then the microscopic contribution in
\eqref{eq:rom_fully_discrete_macro} vanishes,
and the density update formally reduces to
\begin{equation}\label{eq:micro_rom_high_field_limit}
    \boldsymbol\rho^{n+1}
    = \boldsymbol\rho^n-\Delta t\,\intv{A\mathbf M^n}, 
\end{equation}
which is a forward Euler KFVS discretization of the high-field limit \eqref{eq:continuous_high_field_limit}.


\section{Numerical results}
\label{sec:numerical_results}

In this section, we examine the accuracy of the proposed Micro-ROM across the parameter domain, compare it with the Direct-ROM, which applies model reduction directly to the original equation for $f$ (see Appendix~\ref{sec:pod_appendix}), and measure its online computational cost. We also assess the accuracy of the Chebyshev approximations of the field maps. Solution errors are measured against the FOM solutions (see Appendix~\ref{sec:fom_imex_new}).

\subsection{Test problem and error measures}
\label{sec:test_setup}

We consider the one-dimensional periodic spatial domain $[0,1]$ and the truncated velocity domain $[-12,12]$, with zero numerical flux at the velocity boundaries. Unless otherwise stated, $N_x=N_v=128$, $\Delta t=10^{-4}$, and the final time is $T=0.1$. The Chebyshev degree is chosen as $K=6$.

The parameters are the stiffness $\varepsilon$ and the initial beam velocity $v_b$, with numerical parameter domain
\[
  \Theta_{\mathrm{num}}=[10^{-6},1.5]\times[1.1,1.9]
\]
for $(\varepsilon,v_b)$. For a fixed $v_b$, the initial distribution is given by
\[
  f^0_{v_b}(x,v)
  =
  \frac{\rho^0(x)}{(1+\sqrt{T_b})\sqrt{2\pi}}
  \left[
    \exp\!\left(-\frac{v^2}{2}\right)
    +
    \exp\!\left(-\frac{(v-v_b)^2}{2T_b}\right)
  \right],
  \qquad T_b=0.05,
\]
where
\[
  \rho^0(x)=0.3+\exp\!\left(-\frac{(x-0.3)^2}{0.01}\right),
  \qquad
  \eta(x)=0.3+C_\eta\exp\!\left(-\frac{(x-0.6)^2}{0.01}\right).
\]
The constant $C_\eta>0$ is chosen so that the neutrality condition $\int_0^1(\rho^0-\eta)\,\dx=0$ holds.


We sample eight values of $\varepsilon$ log-uniformly
from each of the intervals
\[
 [10^{-1},1.5],\qquad [10^{-3},10^{-1}],\qquad [10^{-6},10^{-3}].
\]
Each sampled $\varepsilon$ is paired with an independently
sampled beam velocity $v_b$, uniform on $[1.1,1.9]$,
giving $24$ training parameter pairs. The snapshot matrix contains snapshots of $g$ at every time level along each training trajectory.

We measure trajectory errors by the relative space--time error
\[
  \sterr(u)
  =
  \left(
    \frac{\displaystyle\sum_{n=1}^{N_T}\Delta t\,
      \norm{u^n_{\mathrm{ROM}}-u^n_{\mathrm{FOM}}}_{\Delta x}^2}
    {\displaystyle\sum_{n=1}^{N_T}\Delta t\,
      \norm{u^n_{\mathrm{FOM}}}_{\Delta x}^2}
  \right)^{1/2}
\]
for $u=\rho,E$, and the analogous weighted $\Delta x\Delta v$ norm for $u=g$.

Final-time relative errors use
\[
  \frac{\norm{u^{N_T}_{\mathrm{ROM}}-u^{N_T}_{\mathrm{FOM}}}_h}
       {\norm{u^{N_T}_{\mathrm{FOM}}}_h},
\]
where $\norm{\cdot}_h$ is the spatial or phase-space discrete norm, as appropriate.

Let $P_r$ be the orthogonal projector onto the first $r$ POD modes in the discrete weighted norm $\norm{\cdot}_{\Delta x\Delta v}$. For each parameter $\mu$, define
\begin{align*}
 e_{\mathrm{proj},\mu}^{(r)}
 &=\left(
 \frac{\sum_{n=1}^{N_T}\Delta t\,
 \norm{(I-P_r)g_{\mu,\mathrm{FOM}}^n}_{\Delta x\Delta v}^2}
 {\sum_{n=1}^{N_T}\Delta t\,
 \norm{g_{\mu,\mathrm{FOM}}^n}_{\Delta x\Delta v}^2}
 \right)^{1/2}.
\end{align*}

\subsection{Reduced basis and dimension}
\label{sec:reduced_dim}

Let $\{\sigma_j\}$ be the singular values of the snapshot matrix. Figure~\ref{fig:compressibility} shows their decay and the dependence of the projection and Micro-ROM errors on $r$. The test parameters are
\[
  v_b=1.5,
  \qquad
  \varepsilon\in\{1,10^{-2},10^{-4},10^{-6}\},
  \qquad
  r\in\{4,8,12,20,40\}.
\]
None of these four parameter pairs belongs to the random training set.

\begin{figure}[htbp]
  \centering
  \includegraphics[width=\textwidth]{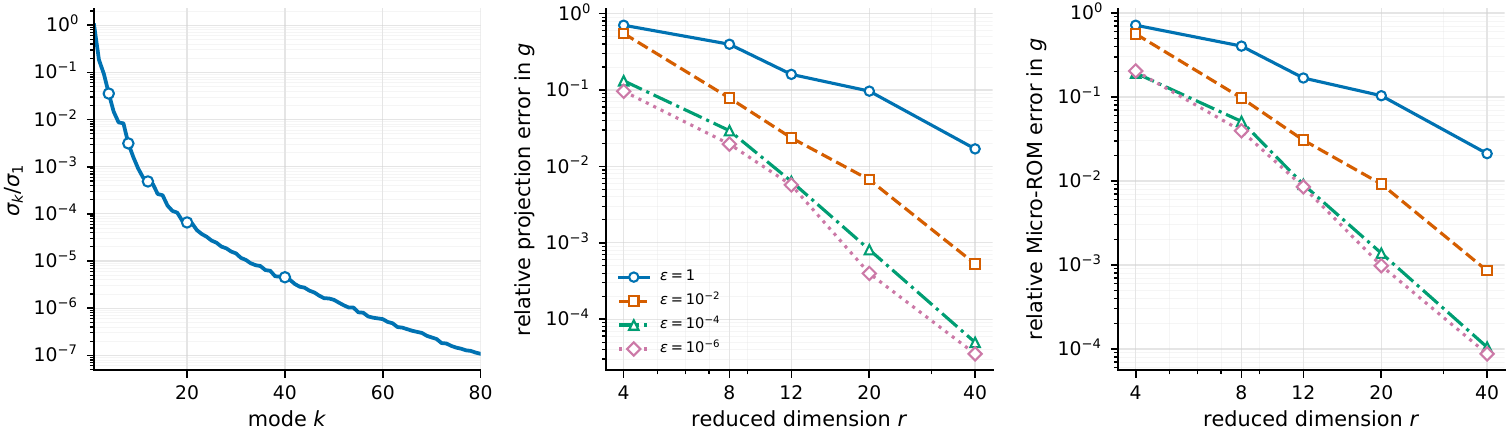}
  \caption{Singular-value decay and errors versus the reduced dimension for four values of $\varepsilon$. Left: normalized singular values of the training snapshot matrix. Center: projection error in $g$. Right: Micro-ROM space--time error in $g$. The error curves use nested leading subspaces of the same basis.}
  \label{fig:compressibility}
\end{figure}

Table~\ref{tab:rank_by_stiffness} gives the projection errors and Micro-ROM errors $\sterr(g)$ plotted in the center and right panels of Figure~\ref{fig:compressibility}. 

\begin{table}[htbp]
\centering
\small
\begin{tabular}{@{}rcccc@{}}
\toprule
& \multicolumn{4}{c}{$e_{\mathrm{proj},\varepsilon}^{(r)}$} \\
\cmidrule(lr){2-5}
$r$ & $\varepsilon=1$ & $\varepsilon=10^{-2}$ & $\varepsilon=10^{-4}$ & $\varepsilon=10^{-6}$ \\
\midrule
 4 & $7.05\times10^{-1}$ & $5.50\times10^{-1}$ & $1.32\times10^{-1}$ & $9.62\times10^{-2}$ \\
 8 & $3.97\times10^{-1}$ & $7.86\times10^{-2}$ & $2.94\times10^{-2}$ & $1.96\times10^{-2}$ \\
12 & $1.60\times10^{-1}$ & $2.36\times10^{-2}$ & $6.36\times10^{-3}$ & $5.71\times10^{-3}$ \\
20 & $9.67\times10^{-2}$ & $6.69\times10^{-3}$ & $8.12\times10^{-4}$ & $3.97\times10^{-4}$ \\
40 & $1.70\times10^{-2}$ & $5.29\times10^{-4}$ & $4.97\times10^{-5}$ & $3.52\times10^{-5}$ \\
\bottomrule
\end{tabular}
\vspace{0.7em}

\begin{tabular}{@{}rcccc@{}}
\toprule
& \multicolumn{4}{c}{{$\sterr(g)$}} \\
\cmidrule(lr){2-5}
$r$ & $\varepsilon=1$ & $\varepsilon=10^{-2}$ & $\varepsilon=10^{-4}$ & $\varepsilon=10^{-6}$ \\
\midrule
 4 & $7.15\times10^{-1}$ & $5.63\times10^{-1}$ & $1.93\times10^{-1}$ & $2.04\times10^{-1}$ \\
 8 & $4.03\times10^{-1}$ & $9.72\times10^{-2}$ & $5.12\times10^{-2}$ & $3.95\times10^{-2}$ \\
12 & $1.68\times10^{-1}$ & $3.06\times10^{-2}$ & $9.08\times10^{-3}$ & $8.51\times10^{-3}$ \\
20 & $1.03\times10^{-1}$ & $9.19\times10^{-3}$ & $1.38\times10^{-3}$ & $9.79\times10^{-4}$ \\
40 & $2.11\times10^{-2}$ & $8.53\times10^{-4}$ & $1.05\times10^{-4}$ & $8.75\times10^{-5}$ \\
\bottomrule
\end{tabular}
\caption{Projection and Micro-ROM errors in the scaled fluctuation $g$ for the four test trajectories at $v_b=1.5$.}
\label{tab:rank_by_stiffness}
\end{table}

At $r=12$, the Micro-ROM errors in $g$ are $9.08\times10^{-3}$ and $8.51\times10^{-3}$ for $\varepsilon=10^{-4}$ and $10^{-6}$, compared with $1.68\times10^{-1}$ for $\varepsilon=1$. Thus, at the same reduced dimension, the two stiff test trajectories have substantially smaller errors than the kinetic test trajectory. Unless otherwise stated, the Micro-ROM uses $r=40$ below; over these four trajectories, the largest Micro-ROM errors in $g$, $\rho$, and $E$ are $2.11\times10^{-2}$, $1.97\times10^{-3}$, and $1.10\times10^{-3}$, respectively.

\subsection{Comparison with Direct-ROM}
\label{sec:stiff_limit}

We compare the final-time density errors of the Micro-ROM with $r=40$ and the Direct-ROM of Appendix~\ref{sec:pod_appendix} with $r=40$ and $60$. We fix $v_b=1.5$, the phase-space grid, and the time step, and vary $\varepsilon$ from $10^{-1}$ to $10^{-6}$. For each $\varepsilon$, the reference is the FOM solution at the same parameter value.

\begin{figure}[htbp]
  \centering
  \includegraphics[width=0.68\textwidth]{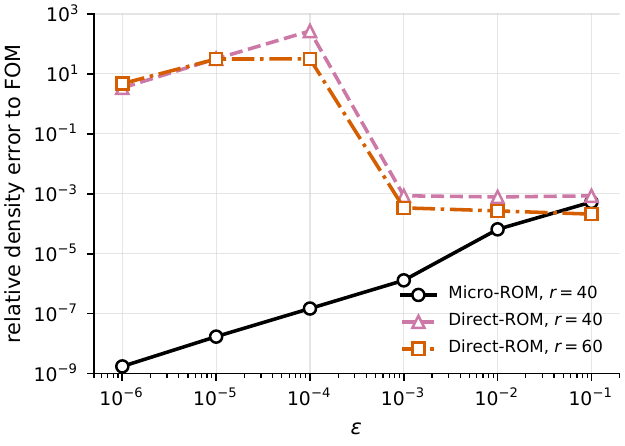}
  \caption{Final-time relative density errors against the FOM at the same $\varepsilon$, with $v_b=1.5$ and fixed discretization. The Direct-ROM is described in Appendix~\ref{sec:pod_appendix}.}
  \label{fig:stiff_limit}
\end{figure}

Figure~\ref{fig:stiff_limit} shows that the Micro-ROM error decreases from $5.29\times10^{-4}$ at $\varepsilon=10^{-1}$ to $1.74\times10^{-9}$ at $\varepsilon=10^{-6}$. For $\varepsilon=10^{-1},10^{-2},10^{-3}$, Direct-ROM approximations also have errors below $10^{-3}$. At the three smaller tested values of $\varepsilon$, however, the Direct-ROM errors exceed one; at $\varepsilon=10^{-6}$, they are $3.40$ and $4.70$ for $r=40$ and $60$, respectively.

This test highlights the importance of preserving the macroscopic balance at small $\varepsilon$. In the Direct-ROM, Galerkin projection can introduce a nonzero velocity moment of the collision term (see Appendix B), whose contribution to the density update is scaled by $1/\varepsilon$. The Micro-ROM avoids this contribution by evolving the density through a separate macroscopic equation, providing a structural explanation for its greater accuracy in the tested high-field regime.

\subsection{Accuracy at additional parameters}
\label{sec:parameter_test}

We test the same basis with $r=40$ at the parameter pairs
\[
  v_b\in\{1.2,1.4,1.6,1.8\},
  \qquad
  \varepsilon\in\{1,10^{-2},10^{-4},10^{-6}\}.
\]
All $16$ pairs are distinct from the random training set and lie within the prescribed parameter domain.

\begin{figure}[htbp]
  \centering
  \includegraphics[width=0.96\textwidth]{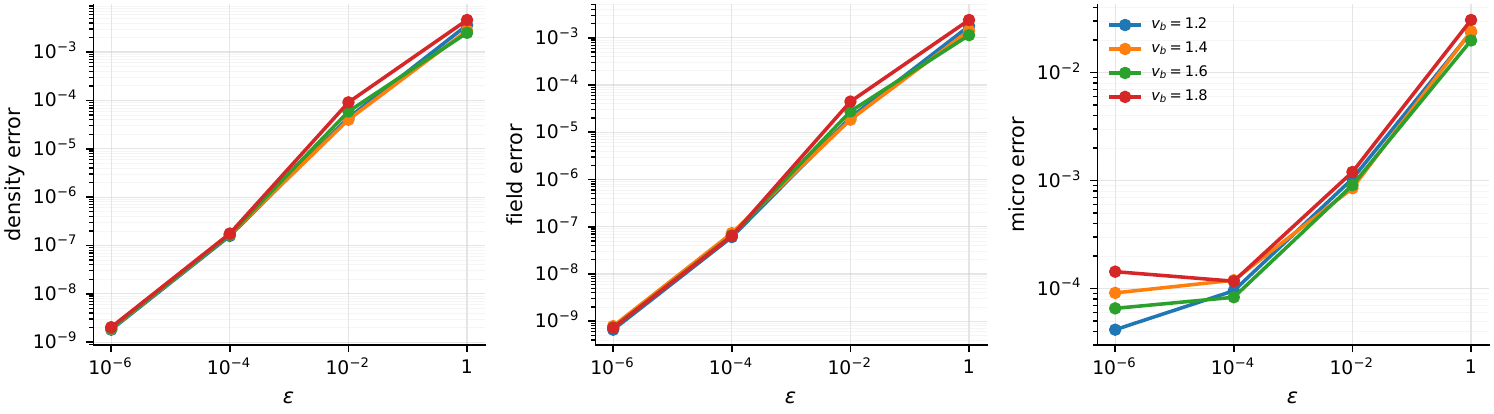}
  \caption{Relative space--time errors of the Micro-ROM with $r=40$ at $16$ test parameter pairs. The same basis is used in every panel.}
  \label{fig:parameter_test}
\end{figure}

Figure~\ref{fig:parameter_test} shows the errors at all $16$ parameter pairs, and Table~\ref{tab:parameter_maxima} reports, at each stiffness, the maximum over the four beam velocities. The largest space--time errors over all $16$ cases are $4.62\times10^{-3}$ for the density, $2.39\times10^{-3}$ for the field, and $3.07\times10^{-2}$ for the fluctuation $g$; all three occur at $(\varepsilon,v_b)=(1,1.8)$.

\begin{table}[htbp]
\centering
\small
\begin{tabular}{@{}lrrrr@{}}
\toprule
maximum over $v_b$ & $\varepsilon=1$ & $10^{-2}$ & $10^{-4}$ & $10^{-6}$ \\
\midrule
$\sterr(\rho)$ & $4.62\times10^{-3}$ & $9.14\times10^{-5}$ & $1.77\times10^{-7}$ & $2.04\times10^{-9}$ \\
$\sterr(E)$    & $2.39\times10^{-3}$ & $4.47\times10^{-5}$ & $7.31\times10^{-8}$ & $7.88\times10^{-10}$ \\
$\sterr(g)$    & $3.07\times10^{-2}$ & $1.20\times10^{-3}$ & $1.20\times10^{-4}$ & $1.43\times10^{-4}$ \\
\bottomrule
\end{tabular}
\caption{Maximum relative space--time error over $v_b\in\{1.2,1.4,1.6,1.8\}$ for each tested stiffness.}
\label{tab:parameter_maxima}
\end{table}

\subsection{Representative phase--space solutions}

Figure~\ref{fig:phase_space_solutions} compares the final FOM and Micro-ROM distributions at two values of $\varepsilon$. For each reduced dimension $r$, we retain the first $r$ modes of the same POD basis. At $(\varepsilon,v_b)=(1,1.8)$, the test pair with the largest micro space--time error among the $16$ cases above, the final-time relative discrete $L^2_{x,v}$ error in $f$ at $r=40$ is $1.88\times10^{-2}$.

For the stiff case $(\varepsilon,v_b)=(10^{-6},1.8)$ in the lower row, $r=7$ yields a final-time relative error of $6.95\times10^{-7}$ in $f$. This further confirms that significantly fewer basis functions are needed in the stiff regime.


\begin{figure}[htbp]
  \centering
  \includegraphics[width=0.98\textwidth]{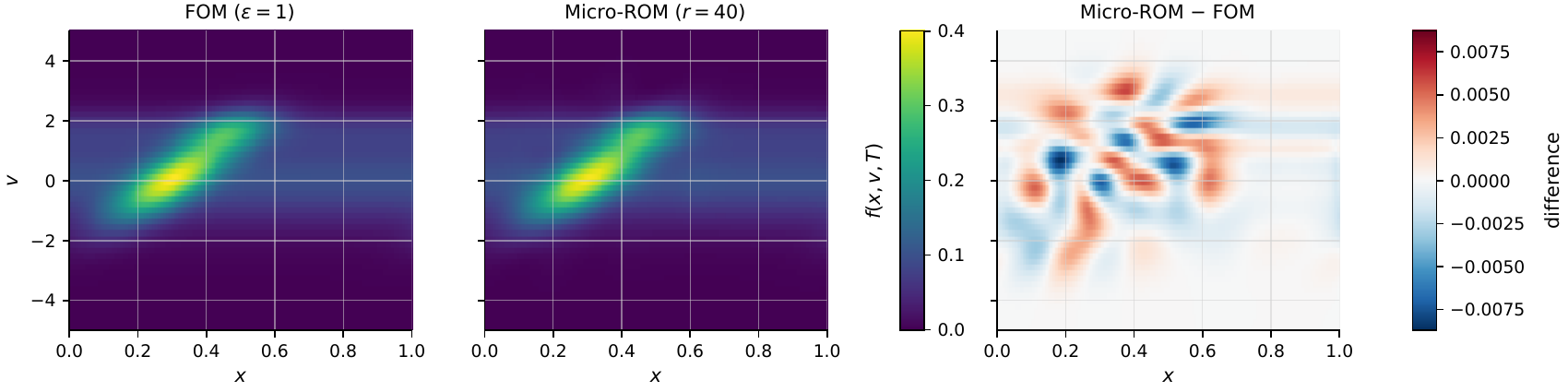}
  \par\vspace{0.6em}
  \includegraphics[width=0.98\textwidth]{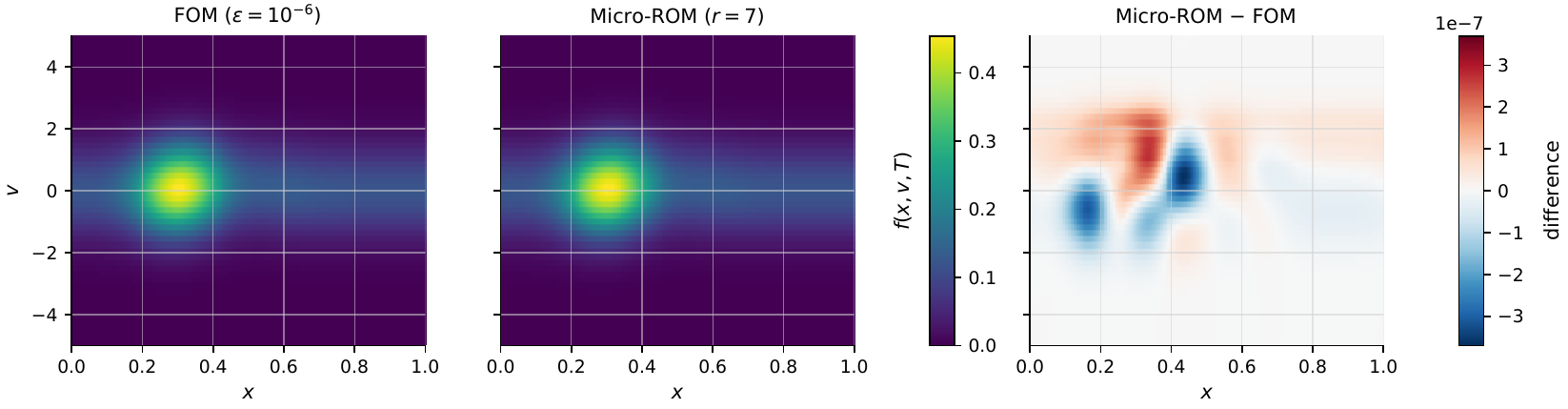}
  \caption{Final distributions at $T=0.1$ for $(\varepsilon,v_b)=(1,1.8)$ with $r=40$ (top) and $(\varepsilon,v_b)=(10^{-6},1.8)$ with $r=7$ (bottom). Each row shows the FOM, the Micro-ROM, and their difference. Within each row, the left and center panels use the same color scale. The displayed velocity interval is $[-5,5]$, while the reported relative errors are evaluated on the full computational domain $[-12,12]$.}
  \label{fig:phase_space_solutions}
\end{figure}

\subsection{Chebyshev approximation of the online field maps}
\label{sec:operator_diagnostics}

We compare the Chebyshev approximations of the field maps with their defining velocity-grid sums.

We evaluate the errors along three FOM trajectories at $v_b=1.5$, with $\varepsilon=1$, $10^{-3}$, and $10^{-6}$, using the basis with $r=40$. For a discrete map $H$ of \S\ref{sec:compression_new}, $H(\mathbf E^n)$ denotes the vector or array obtained by evaluating the map at the local field values $E_p^n$. We compute
\[
 e_H^{(K)}(\mathbf E^n):=
  \frac{\norm{\widetilde H^{(K)}(\mathbf E^n)-H(\mathbf E^n)}_2}
       {1+\norm{H(\mathbf E^n)}_2},
\]
where $\widetilde H^{(K)}$ denotes the degree-$K$ Chebyshev approximation of $H$, and $\norm{\cdot}_2$ is the Euclidean norm over all entries. Table~\ref{tab:chebyshev} reports the maximum of $e_H^{(K)}(\mathbf E^n)$ over $H_0^\pm$, $H_{1,i}$, $H_{2,i}$, and $\partial_EH_{2,i}$ and over the sampled time levels along the three trajectories.

\begin{table}[htbp]
\centering
\small
\begin{tabular}{@{}cc@{}}
\toprule
degree $K$ & maximum $e_H^{(K)}(\mathbf E^n)$ \\
\midrule
2 & $8.80\times10^{-5}$ \\
4 & $5.58\times10^{-8}$ \\
6 & $1.99\times10^{-11}$ \\
8 & $6.70\times10^{-15}$ \\
\bottomrule
\end{tabular}
\caption{Maximum error $e_H^{(K)}(\mathbf E^n)$ of the Chebyshev field maps along the three test trajectories.}
\label{tab:chebyshev}
\end{table}

\subsection{Computational time versus velocity resolution}
\label{sec:timing}

We measure the online computational time for both FOM and ROM with respect to different $N_v$. For each $N_v$, the POD basis is constructed from the same $24$ training parameter pairs in \S\ref{sec:test_setup}. The testing parameter is $(\varepsilon,v_b)=(10^{-2},1.5)$. Table~\ref{tab:timing} reports the average runtime over $15$ repeated runs to $T=0.1$ and the online speedup, defined as the ratio of the FOM runtime to the Micro-ROM runtime. Offline computation and reconstruction of $f$ are not included.

\begin{table}[htbp]
\centering
\small
\begin{tabular}{@{}rrrrrr@{}}
\toprule
$N_v$ & FOM time & \multicolumn{2}{c}{$r=20$} & \multicolumn{2}{c}{$r=40$} \\
\cmidrule(lr){3-4}\cmidrule(lr){5-6}
& (s) & ROM time (s) & speedup & ROM time (s) & speedup \\
\midrule
 32 & $0.1083$ & $0.0474$ & $2.29\times$ & $0.1043$ & $1.04\times$ \\
 64 & $0.2146$ & $0.0478$ & $4.49\times$ & $0.1065$ & $2.01\times$ \\
128 & $0.4446$ & $0.0525$ & $8.47\times$ & $0.1069$ & $4.16\times$ \\
256 & $0.9628$ & $0.0482$ & $19.96\times$ & $0.1065$ & $9.04\times$ \\
512 & $1.7795$ & $0.0481$ & $37.03\times$ & $0.1106$ & $16.09\times$ \\
\bottomrule
\end{tabular}
\caption{Average runtimes and online speedups at $N_x=128$, $\varepsilon=10^{-2}$, $v_b=1.5$, and $K=6$.}
\label{tab:timing}
\end{table}

The FOM runtime increases approximately linearly with $N_v$, while the Micro-ROM runtimes remain approximately constant at a fixed $r$. At $N_v=512$, the online speedups are $16.09\times$ for $r=40$ and $37.03\times$ for $r=20$.

All timings were obtained in double precision on an Apple M1 Max CPU.

\section{Conclusion}\label{sec:con}

We have developed a reduced-order model for the high-field VPFP system based on a micro–macro decomposition. By retaining the density at full spatial resolution and applying POD–Galerkin reduction only to the microscopic fluctuation, the method preserves the macroscopic balance while representing the evolving local equilibrium explicitly. To address the computational cost associated with the self-consistent electric field, we combine precomputed Chebyshev approximations of the projected Maxwellian terms with a separable discretization of the collision operator. Together, these constructions eliminate velocity-grid operations from the online time evolution, enabling efficient repeated simulations across a range of physical parameters. Numerical experiments demonstrate that a single reduced basis provides accurate solutions at parameter values outside the training set, including in the high-field regime. In particular, the Micro-ROM maintains accurate density evolution at small $\varepsilon$, where the uncorrected Direct-ROM exhibits substantial errors. The timing results confirm that the online computational cost remains approximately constant as the velocity resolution increases.

Future work includes extending the method to higher spatial and velocity dimensions, developing higher-order time discretizations, and establishing stability and error estimates that are uniform in $\varepsilon$. Adaptive basis construction could further improve efficiency over broader parameter domains.

\section*{Acknowledgments}
This work was partially supported by DOE grant DE-SC0023164, NSF grants DMS-2409858 and IIS-2433957, DoD MURI grant FA9550-24-1-0254, and AFOSR grant FA9550-26-1-B161.


\appendix
\section{Full-order IMEX scheme for the micro--macro system \eqref{eq:continuous_macro_update} and \eqref{eq:continuous_micro_update}}
\label{sec:fom_imex_new}

In this appendix, we describe the full-order model (FOM) used to generate the training snapshots and reference solutions in \S\ref{sec:numerical_results}. The scheme, written using the discrete operators introduced in  \S\ref{sec:semi_discrete}, is given by
\begin{subequations}\label{eq:fom_imex_struct_new}
\begin{align}
    &\frac{\mathbf{g}^{n+1} - \mathbf{g}^n}{\Delta t}
    + A\mathbf{g}^n
    - \partial_\rho \mathbf{M}^n\,\intv{A\mathbf{g}^n} - \partial_E \mathbf{M}^n\,\mathcal{P}(\intv{A\mathbf{g}^n})
    \nonumber\\
    &\qquad + \frac{1}{\varepsilon}\Bigl(A\mathbf{M}^n - \partial_\rho \mathbf{M}^n\,\intv{A\mathbf{M}^n} - \partial_E \mathbf{M}^n\,\mathcal{P}(\intv{A\mathbf{M}^n})\Bigr)
    \;=\; \frac{1}{\varepsilon}\,L[\mathbf{E}^n]\mathbf{g}^{n+1},
    \label{eq:fom_micro_struct_new}\\
     &\frac{\boldsymbol{\rho}^{n+1} - \boldsymbol{\rho}^n}{\Delta t}
    + \intv{A\mathbf{M}^n} + \varepsilon\,\intv{A\mathbf{g}^{n+1}}
    \;=\; \mathbf{0},
    \label{eq:fom_macro_struct_new}
\end{align}
\end{subequations}
with $\mathbf{E}^n=\mathcal{P}(\boldsymbol{\rho}^n-\boldsymbol{\eta})$ and $\mathbf{M}^n=\mathbf{M}(\boldsymbol{\rho}^n,\mathbf{E}^n)$. 

At each time step, the microscopic equation \eqref{eq:fom_micro_struct_new} is first solved for $\mathbf{g}^{n+1}$. Since $L[\mathbf{E}^n]$ is block-diagonal in $x$ (see \eqref{eq:L_block_diag_new}), this implicit step consists of $N_x$ independent linear systems of size $N_v\times N_v$. The density $\boldsymbol{\rho}^{n+1}$ is then obtained from the macroscopic equation \eqref{eq:fom_macro_struct_new}, followed by the field update $\mathbf{E}^{n+1}=\mathcal{P}(\boldsymbol{\rho}^{n+1}-\boldsymbol{\eta})$. The initial values are $\boldsymbol{\rho}^0=\intv{\mathbf{f}^0}$ and $\mathbf{g}^0=(\mathbf{f}^0-\mathbf{M}^0)/\varepsilon$.

\section{Direct-ROM}
\label{sec:pod_appendix}

For comparison with the Micro-ROM, we construct a reduced-order model by applying POD--Galerkin projection directly to the distribution function $\mathbf{f}$, without using the micro--macro decomposition. We refer to this approach as the Direct-ROM, which is used in \S\ref{sec:stiff_limit}. 

We first consider a first-order IMEX discretization of the original kinetic equation \eqref{eq:VPFP_flux}, using the discrete operators introduced in \S\ref{sec:semi_discrete}:
\begin{equation}\label{eq:fom_imex_kinetic}
    \frac{\mathbf{f}^{n+1}-\mathbf{f}^n}{\Delta t}
    + A\mathbf{f}^n
    \;=\; \frac{1}{\varepsilon}\,L^{n+1}\mathbf{f}^{n+1},
    \qquad L^{n+1}:=L[\mathbf{E}^{n+1}].
\end{equation}
The discrete collision operator has zero velocity moment:
\begin{equation}\label{eq:zero_collision_moment}
    \intv{L[\mathbf{E}]\,\mathbf{z}} \;=\; \mathbf{0} \qquad \text{for every } \mathbf{z}\in\mathbb{R}^N,
\end{equation}
since the flux differences \eqref{eq:fp_flux_difference_new} telescope in $v$ under the zero-flux boundary. Taking the velocity moment of \eqref{eq:fom_imex_kinetic} therefore yields the explicit density update
\begin{equation}\label{eq:fom_imex_density}
    \boldsymbol{\rho}^{n+1} \;=\; \intv{\mathbf{f}^n} - \Delta t\,\intv{A\mathbf{f}^n}.
\end{equation}
Thus, although the collision term is treated implicitly, the updated electric field can be determined before solving for $\mathbf{f}^{n+1}$.  Specifically, we first compute $\boldsymbol{\rho}^{n+1}$ from \eqref{eq:fom_imex_density}, then evaluate $\mathbf{E}^{n+1}=\mathcal{P}(\boldsymbol{\rho}^{n+1}-\boldsymbol{\eta})$ and assemble $L^{n+1}$. Finally, we solve \eqref{eq:fom_imex_kinetic} to obtain $\mathbf{f}^{n+1}$.

In the Direct-ROM, we approximate $\mathbf{f}\approx\Phi_f\,\mathbf{a}_f$, where $\Phi_f=[\boldsymbol{\phi}_{f,1},\ldots,\boldsymbol{\phi}_{f,r}]\in\mathbb{R}^{N\times r}$ is the POD basis constructed directly from snapshots of the distribution function $\mathbf{f}$. The snapshots are collected along the same training trajectories and at the same time levels as those used for the Micro-ROM. As in the Micro-ROM, the basis is orthonormal with respect to the inner product \eqref{eq:discrete_inner_products}. The initial coefficients are obtained by weighted projections $a_{f,i}^0:=\langle\boldsymbol{\phi}_{f,i},\mathbf{f}^0\rangle_{\Delta x\Delta v}$, $i=1,\ldots,r$. The density and field are recovered from the reduced solution as $\boldsymbol{\rho}=\intv{\Phi_f\mathbf{a}_f}$ and $\mathbf{E}=\mathcal{P}(\boldsymbol{\rho}-\boldsymbol{\eta})$. Define the reduced matrices by
\[
    (A_{f,r})_{ij} := \langle\boldsymbol{\phi}_{f,i},\,A\,\boldsymbol{\phi}_{f,j}\rangle_{\Delta x\Delta v},
    \qquad
    (L_{f,r}^{n+1})_{ij} := \langle\boldsymbol{\phi}_{f,i},\,L^{n+1}\boldsymbol{\phi}_{f,j}\rangle_{\Delta x\Delta v}.
\]
Galerkin projection of \eqref{eq:fom_imex_kinetic} onto the span of $\Phi_f$ yields
\begin{equation}\label{eq:pod_rom_imex_new}
    \frac{\mathbf{a}_f^{n+1}-\mathbf{a}_f^n}{\Delta t}
    + A_{f,r}\,\mathbf{a}_f^n
    \;=\; \frac{1}{\varepsilon}\,L_{f,r}^{n+1}\,\mathbf{a}_f^{n+1}.
\end{equation}

The above projected equation is generally nonlinear in $\mathbf a_f^{n+1}$. To obtain a linear reduced solve, we use the following density predictor, motivated by the full-order density update:
 \begin{equation}\label{eq:pod_rom_field_predictor_new}
    \widehat{\boldsymbol{\rho}}^{\,n+1}
    \;:=\; \intv{\Phi_f\bigl(\mathbf{a}_f^n - \Delta t\,A_{f,r}\,\mathbf{a}_f^n\bigr)}.
\end{equation}
We then set $\widehat{\mathbf{E}}^{\,n+1}=\mathcal{P}(\widehat{\boldsymbol{\rho}}^{\,n+1}-\boldsymbol{\eta})$ and assemble the reduced collision matrix $\widehat L_{f,r}^{\,n+1}$ at this predicted field. Replacing $L_{f,r}^{n+1}$ by $\widehat L_{f,r}^{\,n+1}$ in the projected equation gives a linear system for $\mathbf{a}_f^{n+1}$. The updated density is then recovered as 
$\boldsymbol{\rho}^{n+1}=\intv{\Phi_f\mathbf{a}_f^{n+1}}$. 

Unlike the full-order scheme, however, the Direct-ROM does not generally preserve the zero collision moment property under Galerkin projection. Consequently, the reduced collision term may change the recovered density, so the predicted density may not agree with the density obtained after the reduced solve. The numerical results show that this Direct-ROM exhibits substantially larger density errors than the Micro-ROM in the tested small-\(\varepsilon\) regime.

\bibliographystyle{siamplain}
\bibliography{rom_used_references}

\end{document}